\newtheorem{theorem}{Theorem}
\newtheorem{lemma}[theorem]{Lemma}
\theoremstyle{definition}
\newtheorem{example}[theorem]{Example}
\theoremstyle{remark}
\numberwithin{equation}{section}
\newcommand{\set}[1]{\left\{#1\right\}}
\newcommand{\abs}[1]{\left|#1\right|}
\newcommand{\brac}[1]{\left(#1\right)}
\newcommand{\sqbrac}[1]{\left[#1\right]}
\newcommand{\lbrac}[1]{\left\langle#1\right\rangle}
\newcommand{\nm}[1]{\lVert#1\rVert}
\newcommand{\D}{\mathbb{D}}
\newcommand{\slicht}{\mathcal{S}}
\newcommand{\N}{\mathbb{N}}
\newcommand{\R}{\mathbb{R}}
\newcommand{\B}{\mathcal{B}}
\newcommand{\NO}{\mathcal{N}}
\newcommand{\C}{\mathbb{C}}
\newcommand{\riem}{\overline{\mathbb{C}}}
\newcommand{\e}{\varepsilon}
\renewcommand{\phi}{\varphi}
\newcommand{\uloc}{U_{\textrm{loc}}^M}
\newcommand{\uloca}{U_{\textrm{loc}}^A}
\newcommand{\uloch}{U_{\textrm{loc}}^H}
\DeclareMathOperator{\Real}{Re}
\def\VMOA{\mathord{\rm VMOA}}
\def\d{\delta}           \def\e{\varepsilon}
       \def\t{\theta}       
         \def\r{\rho}         \def\z{\zeta}
\begin{document}

\title[On Becker's univalence criterion]{On Becker's univalence criterion}

\author{Juha-Matti Huusko}
\address{Department of Physics and Mathematics, University of Eastern Finland, P.O. Box 111, FI-80101 Joensuu, Finland}
\email{juha-matti.huusko@uef.fi}
\thanks{This research was supported in part by the Academy of Finland \#268009, and the Faculty of Science and Forestry of the University of Eastern Finland \#930349.}

\author{Toni Vesikko}
\address{Department of Physics and Mathematics, University of Eastern Finland, P.O. Box 111, FI-80101 Joensuu, Finland}
\email{tonive@student.uef.fi}

\subjclass[2010]{Primary 34C10, 34M10}

\keywords{Univalence criterion, bounded function, Bloch space, normal function}

\date{\today}


\begin{abstract}
We study locally univalent functions $f$ analytic in the unit disc~$\mathbb{D}$ of the complex plane such that $\left|{f''(z)/f'(z)}\right|(1-|z|^2)\leq 1+C(1-|z|)$ holds for all $z\in\mathbb{D}$, for some $0<C<\infty$. If $C\leq 1$, then $f$ is univalent by Becker's univalence criterion. We discover that for $1<C<\infty$ the function $f$ remains to be univalent in certain horodiscs. Sufficient conditions which imply that $f$ is bounded, belongs to the Bloch space or belongs to the class of normal functions, are discussed. Moreover, we consider generalizations for locally univalent harmonic functions.
\end{abstract}

\maketitle



\section{Introduction}
\label{sc:introduction}

Let $f$ be meromorphic in the unit disc $\D=\set{z\in\mathbb{C} \,:\, |z|<1}$ of the complex plane~$\mathbb{C}$. Then $f$ is locally univalent, denoted by $f\in\uloc$, if and only if its spherical derivative $f^\#(z)=|f'(z)|/(1+|f(z)|^2)$ is non-vanishing. Equivalently, the Schwarzian derivative
    $$
    S(f)=\brac{\frac{f''}{f'}}'-\frac12\brac{\frac{f''}{f'}}^2
    $$
of $f$ is an analytic function.  If $z_0\in\D$ is a pole of $f$, we define $f^\#(z_0)=\lim_{w\to z}f^\#(w)$ and $S(f)(z_0)=\lim_{w\to z_0}S(f)(w)$ along $w\in\D$ where $f(w)\neq 0$. Both the Schwarzian derivative $S(f)$ and the pre-Schwarzian derivative $P(f)=f''/f'$ can be derived from the Jacobian $J_f=|f'|^2$ of $f$, namely
\begin{equation}
\label{eq:jacobian}
P(f)=\frac{\partial}{\partial z}(\log J_f),\quad S(f)=P(f)'-\frac12 P(f)^2.
\end{equation}

According to the famous Nehari univalence criterion~\cite[Theorem~1]{N:1949}, if $f\in\uloc$ satisfies
\begin{equation}
\label{eq:nehari}
\abs{S(f)(z)}(1-|z|^2)^2\leq N,\quad z\in\D,
\end{equation}
for $N=2$, then $f$ is univalent. The result is sharp by an example by Hille~\cite[Theorem~1]{H:1922}.


Binyamin Schwarz~\cite{S:1955} showed that if $f(a)=f(b)$ for some $a\neq b$ for $f\in\uloc$, then
\begin{equation}
\label{eq:schwarz}
\max_{\z\in\lbrac{a,b}}\abs{S(f)(\z)}(1-|\z|^2)^2>2.
\end{equation}
Here $\lbrac{a,b}=\set{\varphi_a(\varphi_a(b)t)\,:\, 0\leq t\leq 1}$
is the hyperbolic segment between $a$ and $b$ and
\begin{equation}
\label{eq:autom}
\varphi_a(z)=\frac{a-z}{1-\overline{a}z}
\end{equation}
is an automorphism of the unit disc. Condition~\eqref{eq:schwarz} implies that if
\begin{equation}
\label{eq:schwarz2}
\abs{S(f)(z)}(1-|z|^2)^2\leq N,\quad r_0\leq |z|<1,
\end{equation}
for $N=2$ and some $0<r_0<1$, then $f$ has finite valence~\cite[Corollary~1]{S:1955}. If \eqref{eq:schwarz2} holds for $N<2$, then $f$ has a spherically continuous extension to~$\overline{\D}$, see~\cite[Theorem~4]{GP:1984}.

Chuaqui and Stowe~\cite[p.~564]{CS:2008} asked whether
\begin{equation}
\label{eq:chuaqui}
\abs{S(f)(z)}(1-|z|^2)^2\leq 2+C(1-|z|),\quad z\in\D,
\end{equation}
where $0<C<\infty$ is a constant, implies that $f$ is of finite valence. The question remains open despite of some progress achieved in~\cite{GR:2015}. Steinmetz~\cite[p.~328]{Steinmetz:1983} showed that if~\eqref{eq:chuaqui} holds, then $f$ is normal, that is, the family $\set{f\circ\varphi_a\,:\, a\in\D}$ is normal in the sense of Montel. Equivalently, $\nm{f}_{\NO}=\sup_{z\in\D}f^\#(z)(1-|z|^2)<\infty$.

A function $f$ analytic in~$\D$ is locally univalent, denoted by $f\in\uloca$, if and only if $J_f=|f'|^2$ is non-vanishing. By the Cauchy integral formula, if $g$ is analytic in~$\D$, then
    $$
    |g'(z)|(1-|z|^2)^2\leq 4\max_{|\z|=\frac{1+|z|^2}{2}}|g(\z)|(1-|\z|^2),\quad z\in\D.
    $$
Consequently, the inequality
    $$
    \nm{S(f)}_{H^\infty_2}\leq 4\nm{P(f)}_{H^\infty_1}+\frac12\nm{P(f)}_{H^\infty_1}^2
    $$
holds. Here, we denote $\nm{g}_{H_p^\infty}=\sup_{z\in\D}|g(z)|(1-|z|^2)^p$ for $0<p<\infty$. Thus, each one of the conditions~\eqref{eq:nehari},~\eqref{eq:schwarz2} and~\eqref{eq:chuaqui} holds if $|f''(z)/f'(z)|(1-|z|^2)$ is sufficiently small for $z\in\D$. Note also that conversely
    $$
    \nm{P(f)}_{H^\infty_1}\leq 2+2\sqrt{1+\frac12\nm{S(f)}_{H^\infty_2}},
    $$
see~\cite[p.~133]{Pommerenke:1964}.

The famous Becker univalence criterion~\cite[Korollar~4.1]{Becker:1972} states that if $f\in\uloca$ satisfies
\begin{equation}
\label{eq:becker}
    \abs{zP(f)}(1-|z|^2)\leq \rho,\quad z\in\D,
\end{equation}
for $\rho\leq 1$, then $f$ is univalent in~$\D$, and if $\rho<1$, then $f$ has a quasi-conformal extension to~$\riem= \C \cup\set{\infty}$. For $\rho>1$, condition~\eqref{eq:becker} does not guarantee the univalence of $f$~\cite[Satz~6]{BP:1984} which can in fact break brutally~\cite{Gevirtz}. If~\eqref{eq:becker} holds for $0<\rho<2$, then $f$ is bounded, and in the case $\rho=2$, $f$ is a Bloch function, that is, $\nm{f}_\B=\sup_{z\in\D}|f'(z)|(1-|z|^2)<\infty$.

Becker and Pommerenke proved recently that if
\begin{equation}
\label{eq:BP}
\abs{\frac{f''(z)}{f'(z)}}(1-|z|^2)< \rho,\quad r_0\leq |z|<1,
\end{equation}
for $\rho<1$ and some $r_0\in(0,1)$, then $f$ has finite valence~\cite[Theorem~3.4]{BP:2016}. However, the case of equality $\rho=1$ in~\eqref{eq:BP} is open and the sharp inequality corresponding to~\eqref{eq:schwarz}, in terms of the pre-Schwarzian, has not been found yet.

In this paper, we consider the growth condition
\begin{equation}
\label{eq:HV}
    \abs{\frac{f''(z)}{f'(z)}}(1-|z|^2)\leq 1+C(1-|z|),\quad z\in\D,
\end{equation}
where $0<C<\infty$ is an absolute constant, for $f\in\uloca$. When~\eqref{eq:HV} holds, we detect that $f$ is univalent in horodiscs $D(ae^{i\theta},1-a)$, $e^{i\theta}\in\partial\D$, for some $a=a(C)\in [0,1)$. Here $D(a,r)=\set{z\in \C \,:\, |z-a|<r}$ is the Euclidean disc with center $a\in \C $ and radius $0<r<\infty$.

The remainder of this paper is organized as follows. In Section~\ref{sc:Distortion}, we see that under condition~\eqref{eq:HV} the function $f\in\uloca$ is bounded. Weaker sufficient conditions which imply that the function $f$ is either bounded, a Bloch function or a normal function are investigated. The main results concerning univalence are stated in Section~\ref{sc:main} and proved in Section~\ref{sc:proofs}.  Finally in Section~\ref{sc:Generalizations for harmonic} we state generalizations of our results to harmonic functions. Moreover, for sake of completeness, we discuss the harmonic counterparts of the results proven in~\cite{GR:2015}.

\section{Distortion theorems}
\label{sc:Distortion}

Recall that each meromorphic and univalent function $f$ in~$\D$ satisfies~\eqref{eq:nehari} for $N=6$. This is the converse of Nehari's theorem, discovered by Kraus~\cite{Kraus:1932}. In the same fashion, each analytic and univalent function $f$ in~$\D$ satisfies
\begin{equation}
\label{eq:koebe}
\abs{\frac{zf''(z)}{f'(z)}-\frac{2|z|^2}{1-|z|^2}}\leq\frac{4|z|}{1-|z|^2},\quad z\in\D,
\end{equation}
and hence~\eqref{eq:becker} holds for~$\rho=6$, which is the converse of Becker's theorem~\cite[p.~21]{P:1975}.

The class $\slicht$ consists of functions $f$ univalent and analytic in~$\D$ such that $f(0)=0$ and $f'(0)=1$. Among all functions in $\slicht$, the Koebe function
    $$
    k(z)=\frac{z}{(1-z)^2}=\frac{1}{(1-z)^2}-\frac{1}{1-z},
    $$
has the extremal growth. Namely, by inequality~\eqref{eq:koebe}, each $f\in\slicht$ satisfies
\begin{equation}
\label{eq:kobe5}
    |f^{(j)}(z)|\leq k^{(j)}(|z|),
    \quad
    \abs{\frac{f^{(j+1)}(z)}{f^{(j)}(z)}}
    \leq \frac{k^{(j+1)}(|z|)}{k^{(j)}(|z|)},\quad j=0,1,
\end{equation}
for $z\in\D\setminus\set{0}$ and $j=0,1$. Moreover, $k$ satisfies condition~\eqref{eq:nehari}, for~$N=6$, with equality for each~$z\in\D$.

Bloch and normal functions emerge in a natural way as Lipschitz mappings. Denote the Euclidean metric by $d_E$, and define the hyperbolic metric in~$\D$ as
    $$
    d_{H}(z,w)=\frac12\log\frac{1+|\varphi_z(w)|}{1-|\varphi_z(w)|},\quad z,w\in\D,
    $$
where $\varphi_z(w)$ is defined as in~\eqref{eq:autom}, and the chordal metric in~$\riem= \C \cup \set{\infty}$ by setting
    $$
    \chi(z,w)=\frac{|z-w|}{\sqrt{1+|z|^2}\sqrt{1+|w|^2}},\quad \chi(z,\infty)=\frac{1}{\sqrt{1+|z|^2}},\quad z\in\riem,\, w\in \C .
    $$
Then each $f\in\B$ is a Lipschitz function from $(\D,d_H)$ to $( \C ,d_E)$ with a Lipschitz constant equal to $\nm{f}_{\B}$, and each $f\in\NO$ is a Lipschitz map from $(\D,d_H)$ to $(\riem,\chi)$ with constant $\nm{f}_\NO$. To see the first claim, assume that $f$ is analytic in $\D$ such that
    $$
    |f(z)-f(w)|\leq M d_H(z,w),\quad z,w\in\D.
    $$
By letting $w\to z$, we obtain $|f'(z)|(1-|z|^2)\leq M$, for all $z\in\D$, and conclude that $\nm{f}_\B\leq M$. Conversely, if $f\in\B$, then
\begin{equation*}
\label{B-normi lips}
\begin{split}
    |f(z)-f(w)|
    \leq\int_{\lbrac{z,w}}|f'(\zeta)||d\zeta|
    \leq\sup_{\zeta\in\lbrac{z,w}}|f'(\zeta)|(1-|\zeta|^2)d_H(z,w),
\end{split}
\end{equation*}
and we conclude that $f$ is a Lipschitz map with a constant $M\leq\nm{f}_\B$.

In the same fashion as above, we deduce that
\begin{equation*}
\label{eq:becker-BC}
    \abs{\frac{f''(z)}{f'(z)}}\leq\frac{B}{1-|z|^2}+\frac{C(1-|z|)}{1-|z|^2},\quad z\in\D,
\end{equation*}
for some $0<B,C<\infty$, is equivalent to
    $$
    \abs{\log\frac{f'(z)}{f'(w)}}
    \leq B d_H(z,w)+C\brac{1-\frac{|z+w|}{2}+\frac{|z-w|}{2}}d_H(z,w),\quad z,w\in\D.
    $$
This follows from the fact that the hyperbolic segment $\lbrac{z,w}$ is contained in the disc $D\brac{(z+w)/2,|z-w|/2}$, which yields
    $$
    1-|\z|\leq 1-\frac{|z+w|}{2}+\frac{|z-w|}{2},\quad \z\in\lbrac{z,w}.
    $$

We may deduce some relationships between the classes $\B$ and $\NO$. By the Schwarz-Pick lemma, each bounded analytic function belongs to $\B$. If $f\in\B$, then both $f\in\NO$ and $e^f\in\NO$. This is clear, since $\chi(z,w)\leq d_E(z,w)$ for all $z,w\in \C $ and since the exponential function is Lipschitz from  $(\C ,d_E)$ to $(\riem , \chi)$. Moreover, since each rational function $R$ is Lipschitz from $(\riem,\chi)$ to itself, $R\circ f\in\NO$ whenever $f\in\NO$. However, it is not clear when $f^2\in\NO$ implies $f\in\NO$.

If $f\in\uloc$ is univalent, then both $f,f'\in\NO$ by the estimate
    $$
    (f^{(j)})^\# (z)
    =\frac{|f^{(j+1)}(z)|}{1+|f^{(j)}(z)|^2}\leq\frac{1}{2}\abs{\frac{f^{(j+1)}(z)}{f^{(j)}(z)}}
    $$
and~\eqref{eq:kobe5}. However, it is not clear if $f''\in\NO$. At least, each primitive $g$ of an univalent function satisfies $g''\in\NO$. Recently, similar normality considerations which have connections to differential equations, were done in~\cite{Grohn:2017}.

If $f\in\uloca$ and there exists $0<\delta<1$ such that $f$ is univalent in each pseudo-hyperbolic disc $\Delta(a,\delta)=\set{z\in\D \, : \, |\varphi_a(z)|<\delta}$, for $a\in\D$, then $f$ is called uniformly locally univalent. By a result of Schwarz, this happens if and only if $\sup_{z\in\D}|S(f)(z)|(1-|z|^2)^2<\infty$, or equivalently if $\log f'\in\B$. Consequently, the derivative of each uniformly locally univalent function is normal.

By using arguments similar to those in the proof of~\cite[Theorem~3.2]{BP:2016} and in~\cite{KS:2002}, we obtain the following result.

\begin{theorem}
\label{th:distortion3}
Let $f$ be meromorphic in~$\D$ such that
\begin{equation}
\label{eq:cond3}
\abs{\frac{f''(z)}{f'(z)}}\leq\varphi(|z|),\quad 0\leq R\leq |z|<1,
\end{equation}
for some $\varphi:[R,1)\to[0,\infty)$.
\begin{enumerate}
\item [\rm(i)] If
\begin{equation}
\label{eq:phi-cond1}
\limsup_{r\to 1^-}(1-r)\exp\brac{\int_R^r\varphi(t)\,dt}<\infty,
\end{equation}
then $\displaystyle \sup_{R<|z|<1}|f'(z)|(1-|z|^2)<\infty$.
\item [\rm(ii)] If
\begin{equation}
\label{eq:phi-cond2}
\int_{R}^1\exp\brac{\int_R^s\varphi(t)\,dt}\,ds<\infty,
\end{equation}
then $\displaystyle \sup_{R<|z|<1}|f(z)|<\infty$.
\end{enumerate}
\end{theorem}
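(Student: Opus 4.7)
The plan is to integrate the differential inequality along radial segments to control $\log f'$, and then integrate $|f'|$ to control $f$ itself.

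First I would observe that the hypothesis~\eqref{eq:cond3} forces $f$ to be analytic with $f'\neq 0$ on the annulus $\set{R\leq |z|<1}$. Indeed, for meromorphic $f$ with $f'\not\equiv 0$, a zero of $f'$ or a pole of $f$ produces a simple pole of $f''/f'$, and this is incompatible with the pointwise finite bound $\varphi(|z|)$. Since $f$ is meromorphic, it has only finitely many poles in $|z|\leq R$, so in particular $M:=\max_{|\z|=R}|f'(\z)|$ is finite and $f$ is uniformly bounded on $|z|=R$.

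Fix $\theta\in[0,2\pi)$ and let $r\in[R,1)$. Along the radial segment $\z=te^{i\theta}$, $R\leq t\leq r$, the function $\log f'$ is single-valued and
    $$
    \log f'(re^{i\theta})-\log f'(Re^{i\theta})=\int_R^r \frac{f''(te^{i\theta})}{f'(te^{i\theta})}\,e^{i\theta}\,dt.
    $$
Taking real parts and invoking~\eqref{eq:cond3} gives
    $$
    |f'(re^{i\theta})|\leq M\,\exp\brac{\int_R^r \varphi(t)\,dt},\qquad R\leq r<1,
    $$
after estimating $|f'(Re^{i\theta})|\leq M$. Multiplying by $1-r^2\leq 2(1-r)$ and using the hypothesis~\eqref{eq:phi-cond1} yields part~(i) immediately, because on $|z|\leq R$ the quantity $|f'(z)|(1-|z|^2)$ is trivially bounded.

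For part~(ii), fix $z=re^{i\theta}$ with $R\leq r<1$ and integrate the bound on $|f'|$ along the same radial segment:
    $$
    |f(re^{i\theta})-f(Re^{i\theta})|\leq\int_R^r|f'(te^{i\theta})|\,dt\leq M\int_R^r\exp\brac{\int_R^s\varphi(t)\,dt}\,ds.
    $$
By hypothesis~\eqref{eq:phi-cond2} the right-hand side is bounded uniformly in $r<1$, and $|f(Re^{i\theta})|$ is bounded uniformly in $\theta$ by the continuity of $f$ on $|z|=R$. Combined with the boundedness of $f$ on $|z|\leq R$ (apart from its finitely many poles, all of which lie in $|z|<R$), this gives the claim.

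The only subtlety is the preliminary observation that \eqref{eq:cond3} rules out zeros of $f'$ and poles of $f$ in the annulus, which is needed both for the primitive $\log f'$ to exist along the radial segments and for the constant $M$ to be finite; once this is in place, the argument is a direct double integration in the spirit of~\cite[Theorem~3.2]{BP:2016}.
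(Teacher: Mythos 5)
Your proof is correct and follows essentially the same route as the paper's: a radial integration of $f''/f'$ to bound $\log|f'|$ and hence $|f'|(1-|z|^2)$, followed by a second radial integration to bound $|f|$, with the added (and welcome) justification that \eqref{eq:cond3} excludes poles of $f$ and zeros of $f'$ in the annulus $R\leq|z|<1$, which the paper only states implicitly. The only blemish is the superfluous closing remarks that $|f'(z)|(1-|z|^2)$ and $f$ are ``trivially bounded'' on $|z|\leq R$ --- false if $f$ has poles there --- but this is harmless since the suprema in the conclusion are taken only over $R<|z|<1$, which your radial estimates already cover.
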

\begin{proof}
Let $\zeta\in\partial\D$. Let $R\leq\rho<r<1$ and note that $f'$ is non-vanishing on the circle $|z|=\rho$. Then
    $$
    \abs{\log\frac{f'(r\zeta)}{f'(\rho\zeta)}}
    \leq\int_\rho^r\abs{\frac{f''(t\zeta)}{f'(t\zeta)}}\,dt
    \leq\int_\rho^r\varphi(t)\,dt.
    $$
Therefore
    $$
    |f'(r\zeta)|\leq|f'(\rho\zeta)|\exp\brac{\int_\rho^r\varphi(t)\,dt},
    $$
which implies the first claim. By another integration,
    $$
    |f(r\zeta)-f(\rho\zeta)|\leq|f'(\rho\zeta)|\int_\rho^r\exp\brac{\int_\rho^s\varphi(t)\,dt}\,ds.
    $$
Hence,
    $$
    |f(z)|\leq M(\rho,f)
    +M(\rho,f')\int_{\rho}^1\exp\brac{\int_\rho^s\varphi(t)\,dt}\,ds<\infty
    $$
for $\rho<|z|<1$.
\end{proof}

The assumptions in Theorem~\ref{th:1}(i) and~(ii) are satisfied, respectively, by the functions
    $$
    \varphi(t)=\frac{2}{1-t^2}=\brac{\log\frac{1+t}{1-t}}
    $$
and
    $$
    \psi(t)=\frac{B}{1-t^2}+\frac{C}{1-t^2}\brac{\log\frac{e}{1-t}}^{-(1+\e)},
    $$
where $0<\e<\infty$, $0<B<2$ and $0<C<\infty$.


By Theorem~\ref{th:distortion3}, if $f$ is meromorphic in~$\D$ and satisfies~\eqref{eq:cond3} and~\eqref{eq:phi-cond1} for some~$\varphi$, then $f\in\NO$. Moreover, if $f$ is also analytic in~$\D$, then $f\in\B$, and if~\eqref{eq:phi-cond2} holds, then $f$ is bounded.

\section{Main results}
\label{sc:main}

Next we turn to present our main results. We consider Becker's condition in a neighborhood of a boundary point $\zeta\in\partial\D$ as well as univalence in certain horodiscs. Furthermore, we state some distortion type estimates similar to the converse of Becker's theorem. Some examples which concerning the main results and the distribution of preimages of a locally univalent function are discussed.

\begin{theorem}\label{Thm-Becker-Local}
Let $f\in\uloca$ and $\z\in\partial\D$.

If there exists a sequence $\{w_n\}$ of points in $\D$ tending to $\z$ such that
    \begin{equation}\label{Eq:LimitInLocalization}
    \left|\frac{f''(w_n)}{f'(w_n)}\right|(1-|w_n|^2)\to c
    \end{equation}
for some $c\in(6,\infty]$, then
for each $\d>0$ there
exists a point $w\in f(\D)$ such that at least two of its distinct
preimages belong to $D(\z,\d)\cap\D$.

Conversely, if for each $\d>0$ there
exists a point $w\in f(\D)$ such that at least two of its distinct
preimages belong to $D(\z,\d)\cap\D$, then
there exists a sequence $\{w_n\}$ of points in $\D$ tending to $\z$ such that \eqref{Eq:LimitInLocalization} holds for some $c\in[1,\infty]$.
\end{theorem}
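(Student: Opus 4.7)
The plan is to prove both directions by contrapositive.

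For the forward direction I would assume $f$ is univalent on $D(\zeta,\delta)\cap\D$ for some $\delta>0$ and show $\limsup_{w\to\zeta,\,w\in\D}|P(f)(w)|(1-|w|^2)\leq 6$, which directly contradicts the hypothesis of a sequence with limit $c>6$. The key observation is that the pseudohyperbolic disc $\Delta(w,r)=\varphi_w(\{|u|<r\})$ sits inside $D(\zeta,\delta)\cap\D$ for all $r$ up to some $r(w)\to 1^-$ as $w\to\zeta$. Consequently the rescaled pullback
\[
H_w(u)=\frac{f(\varphi_w(r(w)u))-f(w)}{-r(w)(1-|w|^2)f'(w)}
\]
belongs to~$\slicht$, and the coefficient bound $|a_2(H_w)|\leq 2$ together with a chain-rule computation (using $\varphi_w'(0)=-(1-|w|^2)$ and $\varphi_w''(0)/\varphi_w'(0)=2\bar{w}$) yields $\abs{P(f)(w)(1-|w|^2)-2\bar{w}}\leq 4/r(w)$, and one triangle inequality then completes this step.

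For the converse I would again argue by contrapositive: assuming $\limsup_{w\to\zeta,w\in\D}|P(f)(w)|(1-|w|^2)<1$, so that $|P(f)(w)|(1-|w|^2)\leq\rho<1$ on $D(\zeta,\delta_0)\cap\D$ for some $\delta_0>0$, the goal becomes univalence of $f$ on some smaller Euclidean lens $D(\zeta,\delta')\cap\D$. For each boundary point $\zeta'$ close to~$\zeta$ and each small $r>0$ with $D((1-r)\zeta',r)\subseteq D(\zeta,\delta_0)\cap\D$, I would apply Becker's criterion to $f\circ\Psi$, where $\Psi(u)=(1-r)\zeta'+ru$ is the affine biholomorphism from $\D$ onto the horodisc $D((1-r)\zeta',r)$. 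Since $P(f\circ\Psi)(u)=rP(f)(\Psi(u))$, the hypothesis gives $|P(f\circ\Psi)(u)|(1-|u|^2)\leq\rho\cdot r(1-|u|^2)/(1-|\Psi(u)|^2)$, and the algebraic identity
\[
\frac{1-|\Psi(u)|^2}{r}-(1-|u|^2)=(1-r)\abs{1-\bar{\zeta'}u}^{2}>0,\qquad u\in\D,
\]
shows that the right-hand side stays strictly below~$\rho<1$; Becker's theorem then delivers univalence of~$f$ on each such horodisc.

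The main technical obstacle I anticipate is the final step: promoting the horodisc-by-horodisc univalence to univalence on a Euclidean lens $D(\zeta,\delta')\cap\D$. Given distinct $a,b\in D(\zeta,\delta')\cap\D$, the hyperbolic segment $\lbrac{a,b}$ lies in $D((a+b)/2,|a-b|/2)$ (as recorded in the paper after~\eqref{eq:becker-BC}). When $a,b$ approach $\zeta$ non-tangentially this Euclidean disc fits inside a single horodisc $D((1-r)\zeta',r)\subseteq D(\zeta,\delta_0)\cap\D$, with $\zeta'$ near the radial projection of $(a+b)/2$, forcing $f(a)\ne f(b)$. In the tangential regime, by contrast, two points $a,b$ near $\zeta$ at nearly opposite angular positions have pseudohyperbolic distance close to one and fail to sit in any small horodisc; handling this case seems to call either for a Cayley-transform argument to the upper half-plane (in which horodiscs at $\zeta$ become half-planes) or for a direct Becker-type integration along the geodesic $\lbrac{a,b}$, and this is where I expect the bulk of the argument to lie.
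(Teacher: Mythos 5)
Your forward direction is correct, and it takes a different route from the paper: instead of transplanting $f$ by the conformal map $T$ of \cite[Lemma~8]{GGPPR:2013} and invoking \eqref{eq:koebe} for $f\circ T$, you rescale $f$ on pseudohyperbolic discs $\Delta(w,r(w))\subset D(\zeta,\delta)\cap\D$ with $r(w)\to1^-$ as $w\to\zeta$, and apply the coefficient bound $|a_2|\le2$ to the normalized pullback $H_w\in\slicht$. The containment claim is easily verified (one may take $1-r(w)\asymp_\delta 1-|w|$), the chain-rule identity gives exactly $\bigl|P(f)(w)(1-|w|^2)-2\overline{w}\bigr|\le 4/r(w)$, hence $\limsup_{w\to\zeta}|P(f)(w)|(1-|w|^2)\le 6$, contradicting \eqref{Eq:LimitInLocalization} with $c>6$. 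This is in effect a localized version of the argument used in the paper for Theorem~\ref{th:3}, and it has the advantage of not requiring the auxiliary map $T$ at all for this half of the theorem.

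The converse, however, has a genuine gap at precisely the step you flagged, and it is not a technicality that can be absorbed by a Cayley transform or by ``integrating Becker along the geodesic.'' Your horodisc argument (which is the same computation as in the proof of Theorem~\ref{th:1}) only yields univalence of $f$ on each horodisc contained in $D(\zeta,\delta_0)\cap\D$, whereas the contradiction requires univalence of $f$ on a full lens $D(\zeta,\delta')\cap\D$: the hypothesis produces pairs $a\ne b$ with $f(a)=f(b)$ arbitrarily close to $\zeta$, and such pairs may approach $\zeta$ tangentially, with $1-|a|\asymp1-|b|\ll|a-b|^2$, in which case no horodisc of radius bounded by $\delta_0/2$ contains both points; so univalence on every admissible horodisc simply does not exclude them. (That horodisc univalence is strictly weaker than lens univalence is implicit in the paper itself: condition \eqref{eq:HV} with large $C$ gives only Theorem~\ref{th:1}, while the valence can still grow.) Moreover, the natural repair of mapping $\D$ conformally onto the lens itself fails, because at the two corners where $\partial D(\zeta,\delta')$ meets $\partial\D$ the mapping behaves like a power map and its pre-Schwarzian is not small there, so Becker's criterion cannot be applied to the composition. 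This is exactly why the paper uses \cite[Lemma~8]{GGPPR:2013}: it supplies a single simply connected domain $\Omega\subset D(\zeta,\delta_0)\cap\D$ whose boundary contains an arc of $\partial\D$ around $\zeta$ (so that $\Omega$ contains a lens at $\zeta$), together with a conformal map $T:\D\to\Omega$ satisfying $|T''(z)/T'(z)|(1-|z|^2)^{1/2}\le1-\rho$; then Schwarz--Pick and your same estimate give $|P(f\circ T)(z)|(1-|z|^2)\le\rho+(1-\rho)\le1$, Becker's criterion \eqref{eq:becker} makes $f\circ T$ univalent, hence $f$ is univalent on $\Omega$, which contradicts the hypothesis. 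Without this (or an equivalent construction handling the tangential regime), your proof of the second assertion is incomplete.
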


\begin{example}
It is clear that \eqref{Eq:LimitInLocalization}, $c\in(6,\infty)$, does not imply that $f$ is of infinite valence. For example, the polynomial $f(z)=(1-z)^{2n+1}$, $n\in\N$, satisfies the sharp inequality
    $$
    \left|\frac{f''(z)}{f'(z)}\right|(1-|z|^2)\le 4n,\quad z\in\D,
    $$
although $f(z)=\e^{2n+1}$ has $n$ solutions in $D(1,\d)\cap\D$ for each $\e\in(0,\d)$ when $\delta\in(0,1)$ is small enough (depending on $n$).

The function $f(z)=(1-z)^{-p}$, $0<p<\infty$, satisfies the sharp inequality
    $$
    \left|\frac{f''(z)}{f'(z)}\right|(1-|z|^2)\le2(p+1),\quad z\in\D,
    $$
and for each $p\in(2n,2n+2]$, $n\in\N\cup\{0\}$, the valence of $f$ is $n+1$ for suitably chosen points in the image set.
\end{example}

Under the condition~\eqref{eq:HV}, function the $f$ is bounded, see Theorem~\ref{th:distortion3} in Section~\ref{sc:Distortion}. Condition~\eqref{eq:HV} implies that $f$ is univalent in horodiscs.

\begin{theorem}\label{th:1}
Let $f\in\uloca$ and assume that~\eqref{eq:HV} holds
for some $0<C<\infty$. If $0<C\leq 1$, then $f$ is univalent in~$\D$. If $1<C<\infty$, then there exists $0<a<1$, $a=a(C)$, such that $f$ is univalent in all discs $D(ae^{i\theta},1-a)$, $0\leq \theta<2\pi$. In particular, we can choose $a=1-(1+C)^{-2}$.
\end{theorem}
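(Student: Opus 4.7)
My plan is to treat the two ranges of $C$ separately, both times reducing the problem to Becker's criterion~\eqref{eq:becker}. For $0<C\leq 1$ the reduction is immediate: multiplying~\eqref{eq:HV} by $|z|$ gives $|zP(f)(z)|(1-|z|^2)\leq r(1+C(1-r))$ with $r=|z|$, and the derivative $1+C-2Cr$ of the right-hand side is non-negative on $[0,1]$ precisely when $C\leq 1$. Hence the bound is maximized at $r=1$, where it equals $1$, so Becker applies and $f$ is univalent in $\D$.

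For $1<C<\infty$, by the rotational invariance of~\eqref{eq:HV} it is enough to prove the claim for the single horodisc $H=D(a,1-a)$ with $a=1-(1+C)^{-2}$, the general case following by rotation. Being a Euclidean disc, $H$ is conformally identified with $\D$ via the affine map $\phi(z)=a+(1-a)z$; setting $F=f\circ\phi$, univalence of $f|_H$ is equivalent to univalence of $F$ in $\D$. Since $\phi$ is affine, $P(\phi)\equiv 0$, hence $P(F)(z)=(1-a)\,P(f)(\phi(z))$. Substituting~\eqref{eq:HV} at $w=\phi(z)$ then reduces the Becker condition for $F$ to the pointwise inequality
\[
\frac{|z|\,(1-a)\,(1-|z|^2)\,(1+C(1-|w|))}{1-|w|^2}\leq 1,\qquad z\in\D.
\]

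To verify this inequality I would write $z=re^{i\theta}$ and observe that, for fixed $r$, the quantity $|w|$ ranges over $[a-(1-a)r,\,a+(1-a)r]$ as $\theta$ varies. A short analysis of $s\mapsto(1+C(1-s))/(1-s^2)$ shows that it is U-shaped on $[0,1)$---strictly decreasing then strictly increasing, with a unique interior minimum---so its maximum on any subinterval is taken at an endpoint. The inequality therefore needs to be checked only at $\theta=0$ and $\theta=\pi$. The case $\theta=\pi$ is handled by the crude bound $r(1-r)\leq 1/4$. The case $\theta=0$ reduces, after setting $b=1-a=(1+C)^{-2}$, to a polynomial inequality in $r$ that factors as $(1-r)$ times a convex quadratic $Q(r)=Cbr^2+(Cb-1)r+(b-2)$; by convexity one needs only $Q(0)=b-2<0$ and $Q(1)=b(2C+1)-3\leq 0$, the latter being equivalent to the elementary inequality $2C+1\leq 3(1+C)^2$.

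The main obstacle, as I see it, lies in pinning down the correct value of $a$ rather than in any single computation: choose it too small and $Q(1)$ becomes positive near $r=1$, choose it too large and the horodisc shrinks needlessly. The choice $a=1-(1+C)^{-2}$ is precisely the one making the cubic at $\theta=0$ vanish to first order at $r=1$, leaving no slack in Becker's criterion on the boundary of the horodisc.
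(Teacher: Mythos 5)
Your proposal is correct and takes essentially the same route as the paper: the case $0<C\leq 1$ follows at once from Becker's criterion, and for $1<C<\infty$ you conjugate by the affine map $z\mapsto a+(1-a)z$ with $a=1-(1+C)^{-2}$, apply Becker to the composition, and verify the resulting inequality using the decreasing-then-increasing behaviour of $t\mapsto(1+C(1-t))/(1-t^2)$, which is exactly the content of the paper's Lemma~\ref{lm:th1}. The only difference is in the final elementary verification (your endpoint check at $\theta=0,\pi$ with the factorization $(1-r)Q(r)$, versus the paper's bound $|T(z)|\leq a+(1-a)|z|$ on the increasing part and the quadratic $k_C$), and both computations are correct.
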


Let $f\in\uloca$ be univalent in each horodisc $D(ae^{i\theta},1-a)$, $0\leq \theta<2\pi$, for some $0<a<1$. By the proof of~\cite[Theorem~6]{GR:2015}, for each $w\in f(\D)$, the sequence of pre-images $\set{z_n}\in f^{-1}(w)$ satisfies
\begin{equation}
\label{eq:carleson}
    \sum_{z_n\in Q}(1-|z_n|)^{1/2}\leq K\ell(Q)^{1/2}
\end{equation}
for any Carleson square $Q$ and some constant $0<K<\infty$ depending on $a$. Here
    $$
    Q=Q(I)
    =\set{re^{i\theta}\,:\, e^{i\theta}\in I,\; 1-\frac{|I|}{2\pi}\leq r<1}
    $$
is called a Carleson square based on the arc $I\subset\partial\D$ and $|I|=\ell(Q)$ is the Euclidean arc length of $I$.

By choosing $Q=\D$ in~\eqref{eq:carleson}, we obtain
\begin{equation*}
\label{eq:counting}
n(f,r,w)=O\left(\frac{1}{\sqrt{1-r}}\right),\quad r\to1^-,
\end{equation*}
where $n(f,r,w)$ is the number of pre-images $\set{z_n}= f^{-1}(w)$ in the disc $\overline{D(0,r)}$. Namely, arrange $\set{z_n}= f^{-1}(w)$ by increasing modulus, and let $0<|z_n|=r<|z_{n+1}|$ to deduce
    $$
    (1-r)^{1/2}n(f,r,w)\leq\sum_{k=0}^n(1-|z_k|)^{1/2}\leq K\ell(\D)^{1/2}<\infty
    $$
for some $0<K(a)<\infty$.

\begin{theorem}\label{th:3}
Let $f\in\uloca$ be univalent in all Euclidean discs
    $$
    D\brac{\frac{C}{1+C}e^{i\theta},\frac{1}{1+C}},\quad e^{i\theta}\in\partial\D,
    $$
for some $0<C<\infty$. Then
    $$
    \abs{\frac{f''(z)}{f'(z)}}(1-|z|^2)\leq 2+4(1+K(z)),\quad z\in\D,
    $$
where $K(z)\asymp(1-|z|^2)$ as $|z|\to 1^-$.
\end{theorem}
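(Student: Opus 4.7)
The plan is to mimic the derivation of the converse of Becker's theorem \eqref{eq:koebe} on each horodisc $D(a,r)$ with $a=\frac{C}{1+C}e^{i\theta}$ and $r=\frac{1}{1+C}$, then choose $\theta$ so that the horodisc is best adapted to the given point $z\in\D$.

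For $z\in\D\setminus\{0\}$, I would take $e^{i\theta}=z/|z|$, so that $0$, $a$, $z$ and the tangency point $e^{i\theta}$ all lie on the same radius. A short computation then yields
$$r-|z-a|=1-|z|,\qquad r^2-|z-a|^2=(1-|z|)\Bigl(|z|+\tfrac{1-C}{1+C}\Bigr),$$
and $z\in D(a,r)$ whenever $|z|>\frac{C-1}{1+C}$ (automatic if $C\leq 1$). The affine map $\phi(w)=a+rw$ conformally identifies $\D$ with $D(a,r)$, so $g:=f\circ\phi$ is univalent on $\D$ by hypothesis. Applying \eqref{eq:koebe} to $g$ and using the triangle inequality yields $|g''(w)/g'(w)|(1-|w|^2)\leq 2|w|+4$, and the chain-rule identities $g''/g'=r\,(f''/f')\circ\phi$ and $w=(z-a)/r$ translate this into
$$\Bigl|\frac{f''(z)}{f'(z)}\Bigr|(r^2-|z-a|^2)\leq 2|z-a|+4r.$$

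Next I would multiply by $(1-|z|^2)/(r^2-|z-a|^2)=(1+|z|)/(|z|+(1-C)/(1+C))$ and use $|z-a|=|z|-\frac{C}{1+C}$ (valid for $|z|\geq\frac{C}{1+C}$). Setting $D=(1+C)|z|+1-C$, the numerator on the right works out to $2\bigl((1+C)|z|+2-C\bigr)=2(D+1)$, so the bound simplifies to
$$\Bigl|\frac{f''(z)}{f'(z)}\Bigr|(1-|z|^2)\leq\frac{2(D+1)(1+|z|)}{D}=2(1+|z|)+\frac{2(1+|z|)}{D}.$$
Matching this with the template $2+4(1+K(z))$ pins down
$$K(z)=\frac{(|z|-1)\bigl((1+C)|z|+1-2C\bigr)}{2\bigl((1+C)|z|+1-C\bigr)},$$
which vanishes at $|z|=1$ and expands as $K(z)\sim\frac{C-2}{8}(1-|z|^2)$ as $|z|\to 1^-$, giving $K(z)\asymp 1-|z|^2$.

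The main obstacle is the bookkeeping in the last step: the identity $D+1=(1+C)|z|+2-C$ is what causes the factor $2(1+|z|)$ to split off cleanly, exposing the ``$2$'' in $2+4(1+K(z))$. This rests essentially on the collinearity $e^{i\theta}=z/|z|$, which produces $r-|z-a|=1-|z|$ and cancels one factor of $1-|z|$ against $1-|z|^2$. A minor caveat is that when $C>1$ and $|z|\leq\frac{C-1}{1+C}$, no admissible horodisc contains $z$; but since the claim $K(z)\asymp(1-|z|^2)$ is only asymptotic at the boundary, this interior regime can be handled separately (or absorbed into the definition of $K$) without affecting the rate.
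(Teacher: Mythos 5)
Your argument is correct, and the computations check out: applying \eqref{eq:koebe} to $g=f\circ\phi$ at the point $w=(z-a)/r$ does give $\abs{f''(z)/f'(z)}(r^2-|z-a|^2)\leq 2|z-a|+4r$, and with the collinear choice $e^{i\theta}=z/|z|$ the algebra leading to $2(1+|z|)+2(1+|z|)/D$ and your closed form for $K$ is right (I verified the polynomial identity and the asymptotics $K\sim\frac{C-2}{4}(1-|z|)$). However, your route differs from the paper's. The paper does not use the affine parametrization of the horodisc nor the pointwise form of \eqref{eq:koebe}; instead, for $|a|>C/(1+C)$ it composes $f$ with $\varphi_a(r_a\,\cdot)$, where $r_a$ is chosen so that the pseudo-hyperbolic disc $\Delta_p(a,r_a)$ is exactly inscribed in the horodisc tangent at $a/|a|$, and then applies \eqref{eq:koebe} only at the origin, i.e.\ the Bieberbach-type bound $|g''(0)/g'(0)|\leq4$, obtaining $\abs{f''(a)/f'(a)}(1-|a|^2)\leq 2+4/r_a$ with $K(a)=1/r_a-1\sim\frac{C}{2}(1-|a|^2)$. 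Your version trades the pseudo-hyperbolic geometry for bookkeeping of $1-|w|^2$ under the affine map, and it actually yields a slightly sharper remainder ($\frac{C-2}{4}(1-|z|)$ versus $\frac{C}{2}(1-|z|^2)$), recovering exactly $2|z|+4$ in the limit $C\to0$. One small caveat: for $C\leq 2$ your explicit $K$ is nonpositive (or of smaller order) near the boundary, so the literal claim $K(z)\asymp(1-|z|^2)$ fails for that particular $K$; since the theorem only asserts an upper bound, this is harmless---replace $K$ by, say, $\max\{K(z),1-|z|^2\}$---but it should be said. Also note that, like the paper's proof (which only treats $|a|>C/(1+C)$), your estimate covers only $|z|>(C-1)/(C+1)$; the interior regime is irrelevant for the asymptotic statement, as you observe.
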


In view of~\eqref{eq:koebe}, Theorem~\ref{th:3} is sharp. Moreover, since~\eqref{eq:koebe} implies
    $$
    \abs{\frac{f''(z)}{f'(z)}}(1-|z|)\leq\frac{4+2|z|}{1+|z|}\leq 4
    $$
for univalent analytic functions $f$, the next theorem is sharp as well.

\begin{theorem}\label{th:2}
Let $f\in\uloca$ be univalent in all Euclidean discs
    $$
    D(ae^{i\theta},1-a)\subset\D,\quad e^{i\theta}\in\partial\D,
    $$
for some $0<a<1$. Then
\begin{equation}\label{eq:thb1}
    \abs{\frac{f''(z)}{f'(z)}}(1-|z|)\leq 4,\quad a\leq|z|<1.
\end{equation}
\end{theorem}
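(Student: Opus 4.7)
The plan is to reduce the claim to the classical converse of Becker's inequality~\eqref{eq:koebe} applied to $f$ pulled back to $\D$ via a linear conformal parametrization of each horodisc $D(ae^{i\theta},1-a)$.

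First, for each $\theta\in[0,2\pi)$ I would introduce the affine map $\psi_\theta(w)=ae^{i\theta}+(1-a)w$, which sends $\D$ conformally onto $D(ae^{i\theta},1-a)$. By hypothesis $g:=f\circ\psi_\theta$ is then analytic and univalent on $\D$, so~\eqref{eq:koebe} combined with the triangle inequality yields
$$
\abs{\frac{g''(w)}{g'(w)}}\leq\frac{2|w|+4}{1-|w|^2},\quad w\in\D,
$$
where the value at $w=0$ is covered by continuity (or equivalently by the Bieberbach bound $|g''(0)|\leq 4|g'(0)|$). Since $\psi_\theta'\equiv 1-a$ and $\psi_\theta''\equiv 0$, the chain rule gives $g''(w)/g'(w)=(1-a)\,f''(\psi_\theta(w))/f'(\psi_\theta(w))$.

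Next, given $z$ with $a\leq|z|<1$ I would take $\theta=\arg z$, so that $z=\psi_\theta(w)$ with $w=(|z|-a)/(1-a)\in[0,1)$. A brief computation gives $1-|w|^2=(1-|z|)(1-2a+|z|)/(1-a)^2$. Substituting this into the previous display and multiplying through by $1-|z|$ reduces the desired bound~\eqref{eq:thb1} to the elementary inequality
$$
\frac{2|z|+4-6a}{1-2a+|z|}\leq 4,
$$
which rearranges to $a\leq|z|$ and therefore holds by assumption.

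No step in this outline looks genuinely hard; the Schwarz--Pick type estimate~\eqref{eq:koebe} does essentially all the work. The only place where real care is needed is the choice $\theta=\arg z$: among the admissible directions this minimizes $|w|$, and it is precisely this minimization that makes the concluding elementary inequality saturate at $|z|=a$ and remain valid throughout $a\leq|z|<1$. Any other admissible $\theta$ would give a strictly weaker bound.
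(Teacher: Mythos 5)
Your argument is correct and follows essentially the same route as the paper: both pull $f$ back to $\D$ through the affine parametrization of a horodisc and invoke~\eqref{eq:koebe}, the only difference being that the paper first passes to the smaller horodisc $D(|z|e^{i\arg z},1-|z|)\subset D(ae^{i\arg z},1-a)$ so that $z$ becomes the center and~\eqref{eq:koebe} is needed only at the origin (where it reduces to $|g''(0)/g'(0)|\le 4$), whereas you apply~\eqref{eq:koebe} at an interior point of the fixed horodisc and close with the elementary inequality $\frac{2|z|+4-6a}{1-2a+|z|}\le 4$, which is equivalent to $a\le|z|$. One cosmetic slip: with your $\psi_\theta(w)=ae^{i\theta}+(1-a)w$ the preimage of $z$ is $w=e^{i\theta}(|z|-a)/(1-a)$ rather than a nonnegative real number, but only $|w|=(|z|-a)/(1-a)$ enters the estimate, so nothing is affected.
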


\begin{example}
Let $f=f_{C,\zeta}$ be a locally univalent analytic function in $\D$ such that $f(-1)=0$ and
\begin{equation*}
\label{examplef}
    f'(z)=-i\left(\frac{1+z}{1-z}\right)^\frac12e^{\frac{C\z z}2},\quad \z\in\partial\D,\, z\in\D.
\end{equation*}
Then
    $$
    \frac{f''(z)}{f'(z)}=\frac1{1-z^2}+\frac{C\z}{2},
    $$
hence~\eqref{eq:HV} holds
and $f$ is univalent in $\D$ if $C\le1$ by Becker's univalence criterion. If $f$ is univalent, then $f-f(0)\in\mathcal{S}$ and we obtain for $\zeta=1$,
		$$
		1\geq \frac{f'(x)}{k'(x)}=\frac{e^{\frac{Cx}{2}}(1-x)^{5/2}}{(1+x)^{1/2}}
		\sim\frac{1+Cx/2}{1+3x},\quad x\to 0^+.
		$$
Therefore, if $C>6$, then $f$ is not univalent.

The boundary curve $\partial f(\D)$ has a cusp at $f(-1)=0$. When $\zeta=-i$, the cusp has its worst behavior, and by numerical calculations the function $f$ is not univalent if $C>2.21$. Moreover, as $C$ increases, the valence of $f$ increases, see Figure~\ref{fig:cc}.

The curve $\set{f(e^{it})\,:\, t\in(0,\pi]}$ is a spiral unwinding from $f(-1)$. We may calculate the valence of $f$ by counting how many times $h(t)=\Real(f(e^{it}))$ changes its sign on $(0,\pi]$. Numerical calculations suggest that the valence of $f$ is approximately equal to $\frac{100}{63}C$.

\begin{figure}[h!]
\centering
\begin{subfigure}[b]{0.475\textwidth}
\centering
\includegraphics[width=\textwidth]{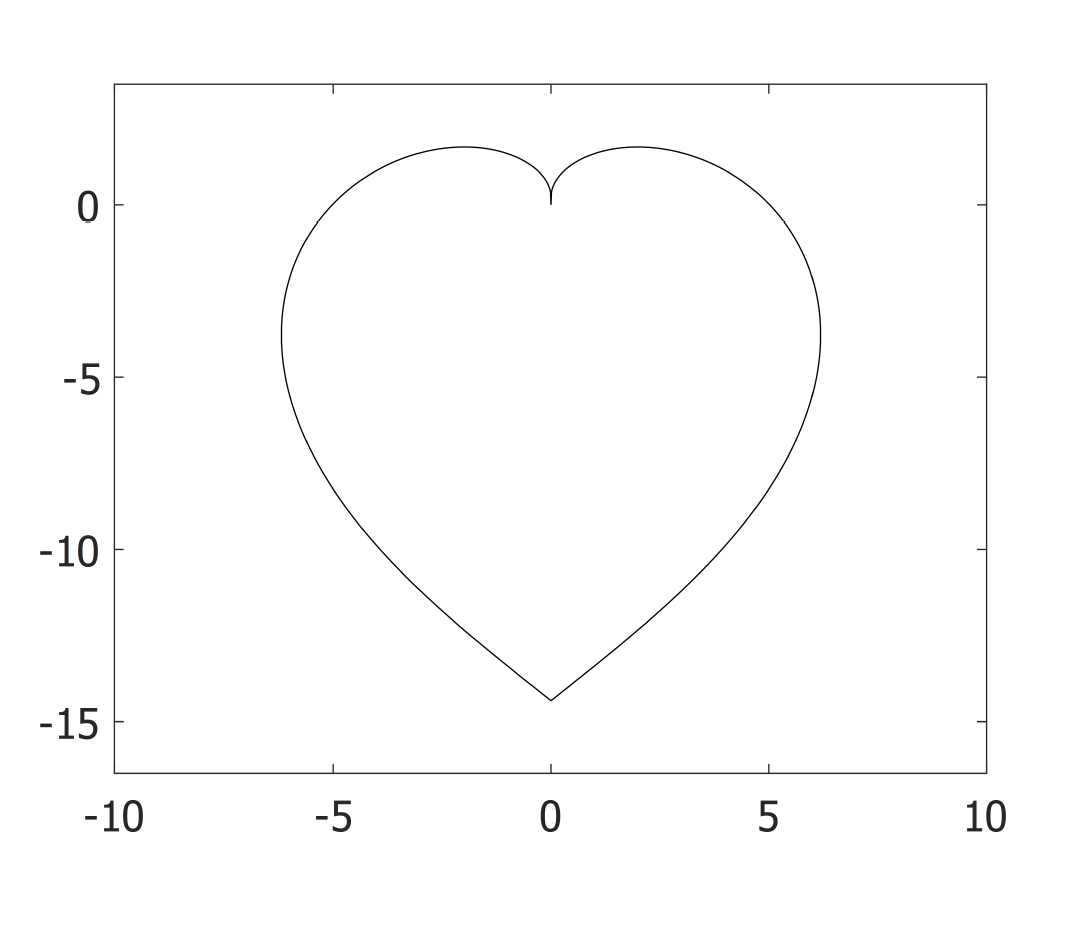}
\caption{$f(\D)$ for $C=2.21$ and $\zeta=-i$.}
\label{fig:c2}
\end{subfigure}
\begin{subfigure}[b]{0.475\textwidth}
\centering
\includegraphics[width=\textwidth]{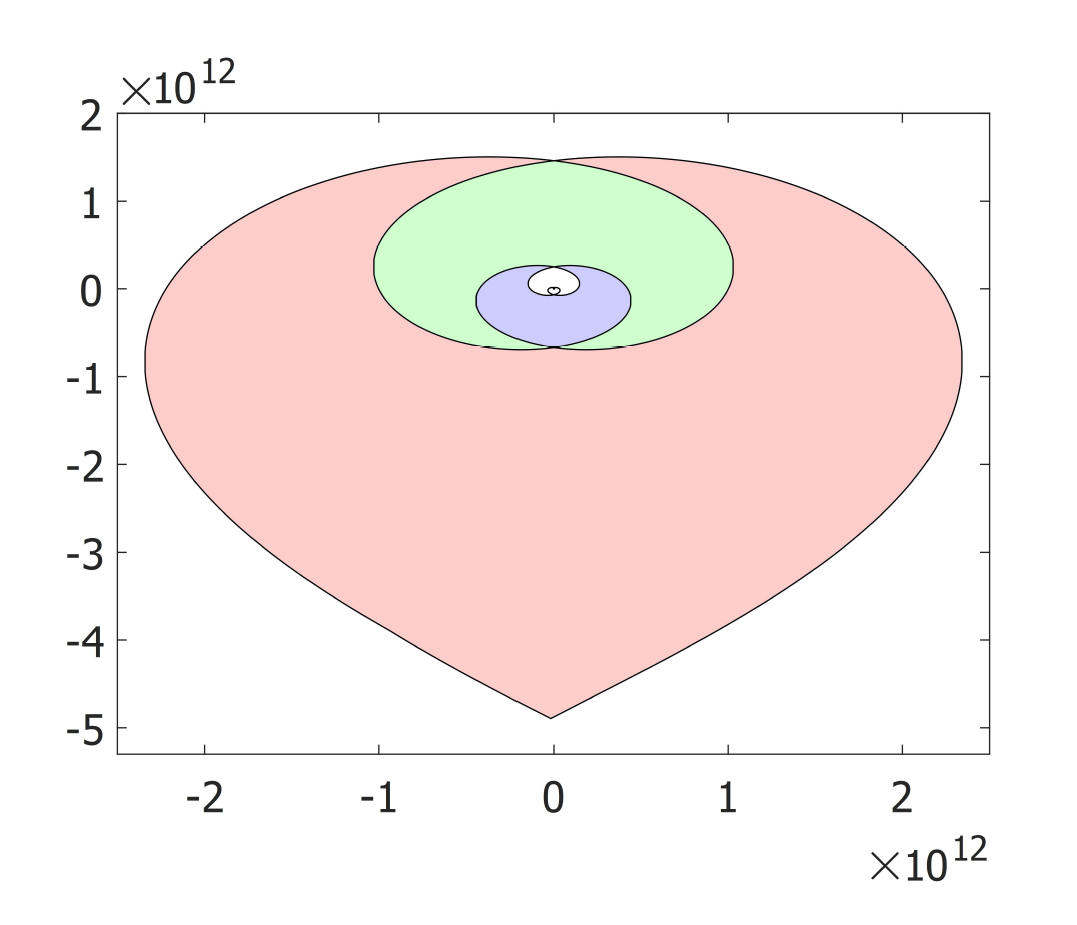}
\caption{$f(\D)$ for $C=30$ and $\zeta=-i$.}
\label{fig:c30}
\end{subfigure}
\caption{Image domain $f(\D)$ for different values of $C$. In~(A), $\partial f(\D)$ is a simple closed curve. In~(B), the valences of red, green and blue parts of $f(\D)$, under $f$, are one, two and three, respectively.}
\label{fig:cc}
\end{figure}
\end{example}

\section{Proofs of main results}
\label{sc:proofs}

In this section, we proof the results stated in Section~\ref{sc:main}.

\begin{proof}[Proof of Theorem~\ref{Thm-Becker-Local}]
To prove the first assertion, assume on the contrary that there
exists $\d>0$ such that $f$ is univalent in $D(\z,\d)\cap\D$. Without loss of generality, we may assume that $\z=1$. Let $T$ be a conformal map of $\D$ onto a domain $\Omega\subset
D(\z,\d)\cap\D$ with the following properties:
    \begin{enumerate}
    \item[\rm(i)] $T(\z)=\z$; \item[\rm(ii)]$\partial\Omega\supset
    \{e^{i\t}:|\arg\z-\t|<t\}$ for some $t>0$; \item[\rm(iii)]
    $\left|\frac{T''(z)}{T'(z)}\right|(1-|z|^2)^\frac12\le 1-\r$ for
    all $z\in\D$, where $\rho\in(0,1)$ is any pregiven number.
    \end{enumerate}
The existence of such a map follows, for instance, by
\cite[Lemma~8]{GGPPR:2013}. Then
    $$
    \left|\frac{f''(T(z))}{f'(T(z))}T'(z)+\frac{T''(z)}{T'(z)}\right|(1-|z|^2)\le6,\quad z\in\D,
    $$
by~\eqref{eq:koebe}, since $f\circ T$ is univalent in~$\D$. Moreover,
$\frac{T''(z)}{T'(z)}(1-|z|^2)\to0$, as $|z|\to1^-$, by (iii). Let
$\{w_n\}$ be a sequence such that $w_n\to\z$, and define $z_n$ by
$T(z_n)=w_n$. Then $z_n\to\z$, and since $T'$ belongs to the disc
algebra by \cite[Lemma~8]{GGPPR:2013}, we have
    $$
    1<\frac{1-|T(z_n)|^2}{|T'(z_n)|(1-|z_n|^2)}\to1^+,\quad n\to\infty.
    $$
For more details, see~\cite[p.~879]{GR:2015}.
It follows that
    \begin{equation*}
    \begin{split}
    &\limsup_{n\to\infty}\,\left|\frac{f''(w_n)}{f'(w_n)}\right|(1-|w_n|^2)\\
    &=\limsup_{n\to\infty}\,\left|\frac{f''(T(z_n))}{f'(T(z_n))}\right|(1-|T(z_n)|^2)\\
    &=\limsup_{n\to\infty}\,\left|\frac{f''(T(z_n))}{f'(T(z_n))}\right||T'(z_n)|(1-|z_n|^2)
    \frac{(1-|T(z_n)|^2)}{|T'(z_n)|(1-|z_n|^2)}\le6,
    \end{split}
    \end{equation*}
which is the desired contradiction.

To prove the second assertion, assume on the contrary that
\eqref{Eq:LimitInLocalization} fails, so that there exist
$\r\in(0,1)$ and $\d\in(0,1)$ such that
    \begin{equation}\label{1}
    \left|\frac{f''(z)}{f'(z)}\right|(1-|z|^2)\le\r,\quad z\in D(\z,\d)\cap\D.
    \end{equation}
 If $g=f\circ T$, then \eqref{1} and
(i)--(iii) yield
    \begin{eqnarray*}
    \left|\frac{g''(z)}{g'(z)}\right|(1-|z|^2)
    &\le&\left|\frac{f''(T(z))}{f'(T(z))}\right||T'(z)|(1-|z|^2)+\left|\frac{T''(z)}{T'(z)}\right|(1-|z|^2)\\
    &\le&\left|\frac{f''(T(z))}{f'(T(z))}\right|(1-|T(z)|^2)+1-\r\le1
    \end{eqnarray*}
for all $z\in\D$. Hence $g$ is univalent in $\D$ by Becker's
univalence criterion, and so is $f$ on $\Omega$. This is clearly a
contradiction.
\end{proof}

\begin{proof}[Proof of Theorem~\ref{th:1}]
Assume that condition~\eqref{eq:HV} holds for some $0<C\leq 1$. Now
    $$
    \abs{\frac{zf''(z)}{f'(z)}}(1-|z|^2)\leq |z|(1+C(1-|z|))\leq |z|+1-|z|=1,
    $$
and hence $f$ is univalent in~$\D$ by Becker's univalence criterion.

Assume that~\eqref{eq:HV} holds for some $1<C<\infty$. It is enough to consider the case $\theta=0$. Let $T(z)=a+(1-a)z$ for $z\in\D$, and $g=f\circ T$. Then
\begin{equation*}\label{eq:tha1}
\begin{split}
(1-|z|^2)\abs{\frac{g''(z)}{g'(z)}}
&= (1-|z|^2)\abs{\frac{f''(T(z))}{f'(T(z))}}|T'(z)|\\
&=\abs{\frac{f''(T(z))}{f'(T(z))}}(1-|T(z)|^2)\frac{(1-|z|^2)|T'(z)|}{1-|T(z)|^2}\\
&\leq\brac{1+C(1-|T(z)|)}\frac{(1-|z|^2)(1-a)}{1-|T(z)|^2}\\
&\leq(1+C(1-|a+(1-a)z|))\frac{(1-|z|^2)(1-a)}{1-|a+(1-a)z|^2}.
\end{split}
\end{equation*}
By the next lemma, for $a=1-(1+C)^{-2}$, $g$ is univalent in~$\D$ and $f$ is univalent in $D(a,1-a)$. The assertion follows.
\end{proof}

\begin{lemma}\label{lm:th1}
Let $1<C<\infty$. Then, for $z\in\D$,
    $$
    \brac{1+C\brac{1-\abs{\frac{C^2+2C}{C^2+2C+1}+\frac{1}{(1+C)^2}z}}}
    \times
    \frac{(1-|z|^2)\frac{1}{(1+C)^2}}{1-\abs{\frac{C^2+2C}{C^2+2C+1}+\frac{1}{(1+C)^2}z}^2}
    \leq 1.
    $$
\end{lemma}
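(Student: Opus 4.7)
The plan is to translate the estimate into Euclidean language on the horodisc $D(a,r)$ where $a=(C^2+2C)/(C^2+2C+1)=1-(1+C)^{-2}$ and $r=1-a=(1+C)^{-2}$. Setting $w=a+rz$, the identity $1-|z|^2=(r^2-|w-a|^2)/r^2$ turns the claim into
$$
\bigl(1+C(1-|w|)\bigr)\bigl(r^2-|w-a|^2\bigr)\le r\bigl(1-|w|^2\bigr),\qquad w\in D(a,r).
$$

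The next step is to reduce to a one-variable problem. For fixed $t=|w|$, the right-hand side depends on $t$ alone, while on the left only $|w-a|^2=t^2-2at\cos(\arg w)+a^2$ depends on $\arg w$, and the whole left side is maximized at $\arg w=0$; since $a>3/4>1/2$ when $C>1$, this extremal point $w=t$ lies in $\overline{D(a,r)}$ exactly when $t\in[2a-1,1]$. So it suffices to prove the inequality for real $w\in[2a-1,1)$. Using $r^2-(w-a)^2=(1-w)(w-(2a-1))$ and $1-w^2=(1-w)(1+w)$, dividing by $(1-w)>0$ brings it to the quadratic inequality
$$
Q(w):=Cw^2-(1+2C)(1-r)w+\bigl[(1+C)-r(1+2C)\bigr]\ge 0,\qquad w\in[2a-1,1].
$$

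The heart of the argument is then the direct verification that $Q(1)=0$ (the $r(1+2C)$ terms cancel and what remains is $C-(1+2C)+(1+C)=0$), so $w=1$ is a root. By Vieta's formulas the other root equals
$$
\alpha=\frac{(1+C)-r(1+2C)}{C}=\frac{(1+C)^3-(1+2C)}{C(1+C)^2}=\frac{1+3C+C^2}{(1+C)^2}=1+\frac{C}{(1+C)^2}>1.
$$
Consequently $Q(w)=C(w-1)(w-\alpha)$, and since $w\le 1<\alpha$, both factors are non-positive, giving $Q(w)\ge 0$. This yields the claim.

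The main obstacle, and simultaneously the key observation, is recognizing that the specific value $a=1-(1+C)^{-2}$ is tuned precisely so that $w=1$ becomes a root of $Q$; a slightly different choice of $a$ would push one root below $1$ and the estimate would fail somewhere inside the horodisc. Once this algebraic coincidence is spotted, everything else reduces to routine expansion and Vieta's relations.
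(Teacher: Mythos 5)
Your proof is correct, and it takes a genuinely different (and cleaner) route than the paper's. The paper works directly with the radial function $h(t)=(1+C(1-t))/(1-t^2)$, locates its critical point $t_C=(1+C-\sqrt{1+2C})/C$, and argues by cases: on $[0,t_C]$ it bounds $h(t)\le h(0)$, and on $[t_C,1)$ it bounds $t\le t'=\frac{C^2+2C+|z|}{C^2+2C+1}$ and then verifies the resulting polynomial inequality $k_C(r)\le 2(1+C)^2$ by a separate monotonicity argument. You instead transfer the inequality to the horodisc variable $w=a+rz$, reduce to the real diameter via the reverse triangle inequality ($|w-a|\ge||w|-a|$, so the worst case for fixed $|w|$ is $w$ real positive), cancel the common factor $1-w$, and collapse everything into the single quadratic inequality $Q(w)\ge 0$ with $Q(1)=0$ and second root $\alpha=1+C/(1+C)^2>1$; I checked the expansion, the value $Q(1)=0$, and the Vieta computation, and they are all right. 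Your approach avoids both the case split and the calculus, and it makes the extremal behaviour ($w\to 1$) transparent; the paper's approach is more pedestrian but needs no change of variables. One small slip in your closing remark: as your own cancellation shows, $Q(1)=0$ holds for \emph{every} horodisc tangent at $1$ (any $r=1-a$), so the choice $a=1-(1+C)^{-2}$ is not what produces the root at $w=1$; it is only needed to keep the other root $\alpha=\bigl((1+C)-r(1+2C)\bigr)/C$ at least $1$, which in fact holds for every $r\le 1/(1+2C)$, i.e. $a\ge 2C/(1+2C)$ --- so the lemma is not borderline for this particular $a$, and a slightly smaller $a$ would still work. This does not affect the validity of your proof of the stated lemma.
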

\begin{proof}
Let $h:[0,1)\to\R$, be defined by $h(t)=(1+C(1-t))/(1-t^2)$. Then
    $$
    h'(t)=\frac{-Ct^2+2(1+C)t-C}{(1-t^2)^2}=0
    $$
if and only if $t=t_C=\frac{1+C-\sqrt{1+2C}}{C}\in(0,1)$. Hence, $h$ is strictly decreasing on $[0,t_C]$ and strictly increasing on $[t_C,1]$. If
    $$
    t=\abs{\frac{C^2+2C}{C^2+2C+1}+\frac{1}{(1+C)^2}z}\leq t_C,
    $$
then
\begin{equation*}\label{eq:lma1}
\begin{split}
h(t)(1-|z|^2)\frac{1}{(1+C)^2}
&\leq h(0)(1-|z|^2)\frac{1}{(1+C)^2}
\leq \frac{1}{1+C}\leq 1.
\end{split}
\end{equation*}
On the other hand, if
    $$
    t_C<t=\abs{\frac{C^2+2C}{C^2+2C+1}+\frac{re^{i\theta}}{(1+C)^2}}\leq\frac{C^2+2C+r}{C^2+2C+1}=t',
    $$
then we obtain
\begin{equation}\label{eq:lma2}
\begin{split}
h(t)\frac{(1-|z|^2)}{(1+C)^2}
\leq h(t')\frac{1-r^2}{(1+C)^2}
=\frac{(1+C)^2+C(1-r)}{2(1+C)^2-(1-r)}(1+r)\leq 1,
\end{split}
\end{equation}
provided that
    $$
    k_C(r)=(1+r)\sqbrac{(1+C)^2+C(1-r)}+1-r\leq 2(1+C)^2.
    $$
Since $k_C(1)\leq 2(1+C)^2$ and
    $$
    k_C'(r)=(1+C)^2+C(1-r)-C(1+r)-1>0
    $$
for $r<1+C/2$, inequality~\eqref{eq:lma2} holds. This ends the proof of the lemma.
\end{proof}

\begin{proof}[Proof of Theorem~\ref{th:3}]
Let $a\in\D$, $0<C/(1+C)<|a|<1$ and $g(z)=f(\varphi_a(r_az))$,
where $\varphi_a(z)$ is defined as in~\eqref{eq:autom}. Moreover, let
    $$
    r_a^2=\frac{|a|-\frac{C}{1+C}}{|a|\brac{1-|a|\frac{C}{1+C}}}.
    $$
The pseudo-hyperbolic disc $\Delta_p(\alpha,\rho)=\set{z\in\D\,:\, |\varphi_\alpha(z)|\leq\rho}$ with center $\alpha\in\D$ and radius $0<\rho<1$ satisfies
    $$
    \Delta_p(\alpha,\rho)
		=D\brac{\xi(\alpha,\rho),R(\alpha,\rho)}
    =D\brac{\frac{1-\rho^2}{1-|\alpha|^2\rho^2}\alpha,\frac{1-|\alpha|^2}{1-|\alpha|^2\rho^2}\rho}.
    $$
We deduce
    $$
    \Delta_p(a,r_a)
		=
		D\brac{\frac{a}{|a|}\frac{C}{1+C},R(a,r_a)}
		\subset
		D\brac{\frac{a}{|a|}\frac{C}{1+C},\frac{1}{1+C}},
    $$
so that $g$ is univalent in~$\D$. Now
    $$
    \frac{g''(0)}{g'(0)}
    =\frac{f''(a)}{f'(a)}\varphi_a'(0)r_a+\frac{\varphi_a''(0)}{\varphi_a'(0)}r_a
    =-\frac{f''(a)}{f'(a)}(1-|a|^2)r_a+2\overline{a}r_a.
    $$
By~\eqref{eq:koebe}, $|g''(0)/g'(0)|\leq 4$ and therefore
    $$
    \abs{\frac{f''(a)}{f'(a)}(1-|a|^2)-2\overline{a}}\leq\frac{4}{r_a},
    $$
which implies
    $$
    \abs{\frac{f''(a)}{f'(a)}}(1-|a|^2)\leq 2+\frac{4}{r_a}=2+4(1+K(a)),
    $$
where
    $$
K(a)=\frac{1}{r_a}-1
=\frac{1-r_a^2}{r_a(1+r_a)}
\sim \frac{1}{2}(1-r_a^2)
=\frac{1}{2}\frac{\frac{C}{1+C}(1-|a|^2)}{|a|\brac{1-|a|\frac{C}{1+C}}}\sim \frac{C}{2}(1-|a|^2),
    $$
as $|a|\to 1^-$.
\end{proof}

\begin{proof}[Proof of Theorem~\ref{th:2}]
It suffices to prove~\eqref{eq:thb1} for $|z|=a$, since trivially $f$ is univalent also in $D(be^{i\theta},1-b)\subset D(ae^{i\theta},1-a)$ for $a<b<1$ and $e^{i\theta}\in\partial\D$. Moreover, by applying a rotation $z\mapsto\lambda z$, $\lambda\in\partial\D$, it is enough to prove~\eqref{eq:thb1} for $z=a$.

Let $T(z)=a+(1-a)z$ for $z\in\D$. Now $g=f\circ T$ is univalent in~$\D$ and by~\eqref{eq:koebe}
    $$
    \abs{\frac{f''(a)}{f'(a)}}(1-a)
    =\abs{\frac{f''(T(0))}{f'(T(0))}}|T'(0)|
    =\abs{\frac{g''(0)}{g'(0)}}\leq 4.
    $$
The assertion follows.
\end{proof}

\section{Generalizations for harmonic functions}
\label{sc:Generalizations for harmonic}

Let $f$ be a complex-valued and harmonic function in~$\D$. Then $f$ has the unique representation $f=h+\overline{g}$, where both $h$ and $g$ are analytic in~$\D$ and $g(0)=0$. In this case, $f=h+\overline{g}$ is orientation preserving and locally univalent, denoted by $f\in\uloch$, if and only if its Jacobian $J_f=|h'|^2-|g'|^2>0$, by a result by Lewy~\cite{Lewy}. In this case, $h\in\uloca$ and the dilatation $\omega_f=\omega=g'/h'$ is analytic in~$\D$ and maps~$\D$ into itself. Clearly $f=h+\overline g$ is analytic if and only if the function $g$ is constant.

For $f=h+\overline{g}\in\uloch$, equation~\eqref{eq:jacobian} yields the harmonic pre-Schwarzian and Schwarzian derivatives:
    $$
    P(f)
    =P(h)-\frac{\overline{\omega}\, \omega'}{1-|\omega|^2}.
    $$
and
    $$
    S(f)=S(h)+\frac{\overline \omega}{1-|\omega|^2} \left(\frac{h''}{h'}\, \omega'-\omega''\right)-\frac 32 \left(\frac{\overline \omega\, \omega'}{1-|\omega|^2}\right)^2.
    $$
This generalization of $P(f)$ and $S(f)$ to harmonic functions was introduced and motivated in~\cite{HM-Schwarzian}.

There exists $0<\delta_0<2$ such that if $f\in\uloch$ satisfies~\eqref{eq:nehari} for $N=\delta_0$, then $f$ is univalent in~$\D$, see~\cite{AW:1962} and~\cite{HM-qc}. The sharp value of $\delta_0$ is not known. Moreover, if $f\in\uloch$ satisfies
		$$
		\abs{P(f)}(1-|z|^2)+\frac{|\omega'(z)|(1-|z|^2)}{1-|\omega(z)|^2}\leq 1,\quad z\in\D,
		$$
then $f$ is univalent. The constant $1$ is sharp, by the sharpness of Becker's univalence criterion. If one of these mentioned inequalities, with a slightly smaller right-hand-side constant, holds in an annulus $r_0<|z|<1$, then $f$ is of finite valence~\cite{HM:2017}.

Conversely to these univalence criteria, there exist absolute constants $0<C_1,C_2<\infty$ such that if $f\in\uloch$ is univalent, then~\eqref{eq:nehari} holds for $N=C_1$ and~\eqref{eq:becker} holds for $\rho=C_2$, see~\cite{HM-2015}. The sharp values of~$C_1$ and~$C_2$ are not known.

By the above-mentioned analogues of Nehari's criterion, Becker's criterion and their converses, we obtain generalizations of the results in this paper for harmonic functions. Of course, the correct operators and constants have to be involved. Theorem~\ref{Thm-Becker-Local} and its analogue~\cite[Theorem~1]{GR:2015} for the Schwarzian derivative $S(f)$ are valid as well. Moreover, Theorems~\ref{th:1},~\ref{th:3}, and~\ref{th:2} are valid. We leave the details for the interested reader.

We state the important generalization of~\cite[Theorem~3]{GR:2015} for harmonic functions here. It gives a sufficient condition for the Schwarzian derivative of $f\in\uloch$ such that the preimages of each $w\in f(\D)$ are separated in the hyperbolic metric. Here $\xi(z_1,z_2)$ is the hyperbolic midpoint of the hyperbolic segment $\lbrac{z_1,z_2}$ for $z_1,z_2\in\D$.

\begin{theorem}
\label{th:harmonic-preimage}
Let $f=h+\overline{g}\in\uloch$ such that
\begin{equation*}
\label{eq:harmonic-chuaqui}
    |S_H(f)|(1-|z|^2)\leq\delta_0(1+C(1-|z|)),\quad z\in\D,
\end{equation*}
for some $0<C<\infty$. Then each pair of points $z_1,z_2\in\D$ such that $f(z_1)=f(z_2)$ and $1-|\xi(z_1,z_2)|<1/C$ satisfies
\begin{equation}
\label{harm-separation}
    d_H(z_1,z_2)\geq
    \log\frac{2-C^{1/2}(1-|\xi(z_1,z_2)|)^{1/2}}
    {C^{1/2}(1-|\xi(z_1,z_2)|)^{1/2}}.
\end{equation}
Conversely, if there exists a constant $0<C<\infty$ such that each pair of points $z_1,z_2\in\D$ for which $f(z_1)=f(z_2)$ and $1-|\xi(z_1,z_2)|<1/C$ satisfies~\eqref{harm-separation}, then
    $$
    |S_H(f)|(1-|z|^2)\leq C_2(1+\Psi_C(|z|)(1-|z|)^{1/3}),\quad 1-|z|<(8C)^{-1},
    $$
where $\Psi_C$ is positive, and satisfies $\Psi_C(|z|)\to (2(8C)^{1/3})^+$ as $|z|\to 1^-$.
\end{theorem}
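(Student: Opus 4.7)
The plan is to transplant the proof of the analytic counterpart~\cite[Theorem~3]{GR:2015} to the harmonic setting using two ingredients that have already been set up: the harmonic Nehari-type univalence criterion with sharp constant $\delta_0$, and the M\"obius-invariance of the harmonic Schwarzian. Since $S_H$ satisfies the usual chain rule under right-composition and $S_H(\phi)=0$ for a M\"obius automorphism $\phi$ of $\D$, one has $|S_H(F\circ\phi)(w)|(1-|w|^2)^2=|S_H(F)(\phi(w))|(1-|\phi(w)|^2)^2$, which permits moving the midpoint $\xi(z_1,z_2)$ to the origin without changing the form of the hypothesis.

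For the direct implication, set $\xi=\xi(z_1,z_2)$, $s=\tanh(d_H(z_1,z_2)/2)$ and $\sigma=\sqrt{C(1-|\xi|)}$. Composing $f$ with $\phi_\xi$ and following with a rotation produces $F\in\uloch$ satisfying $F(s)=F(-s)$ and the transferred bound
$$|S_H(F)(w)|(1-|w|^2)^2\leq\delta_0\bigl(1+C(1-|\phi_\xi(w)|)\bigr),\quad w\in\D.$$
I then argue by contradiction: assuming $s<1-\sigma$, I choose $\rho\in(s,1-\sigma)$, form the rescaling $G(w)=F(\rho w)$, and use $S_H(G)(w)=\rho^2 S_H(F)(\rho w)$ to obtain
$$|S_H(G)(w)|(1-|w|^2)^2\leq\delta_0\left(\frac{\rho(1-|w|^2)}{1-\rho^2|w|^2}\right)^2\bigl(1+C(1-|\phi_\xi(\rho w)|)\bigr).$$
The core analytic step is showing that the right-hand side does not exceed $\delta_0$ on $\D$. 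The Schwarz-Pick factor attains its maximum $\rho^2$ at $w=0$, where the distortion factor equals $1+C(1-|\xi|)$, so the worst case reduces to $\rho^2(1+C(1-|\xi|))\leq 1$, which holds for $\rho\leq 1-\sigma$ to leading order. One checks that the maximum of the product is indeed achieved at the origin via an elementary monotonicity argument, exploiting that $|\phi_\xi(\rho w)|$ is pushed away from $\xi$ as $|w|$ grows. The resulting univalence of $G$ on $\D$ produced by the harmonic Nehari criterion contradicts $G(s/\rho)=G(-s/\rho)$.

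For the converse, the separation hypothesis forbids multiple preimages inside small pseudo-hyperbolic subdiscs. For $z\in\D$ with $1-|z|<(8C)^{-1}$, two distinct preimages $z_1,z_2\in\Delta_p(z,r)$ of a common value would yield $1-|\xi(z_1,z_2)|\asymp 1-|z|$ with $d_H(z_1,z_2)$ small, contradicting~\eqref{harm-separation} once $r$ is chosen of order $(1-|z|)^{1/3}$. Hence $f$ is univalent on such a pseudo-hyperbolic disc; pulling back by the corresponding M\"obius-scaling and invoking the converse of the harmonic Nehari criterion produces the quantitative bound
$$|S_H(f)(z)|(1-|z|^2)^2\leq C_2\bigl(1+\Psi_C(|z|)(1-|z|)^{1/3}\bigr),$$
where $\Psi_C$ records the distortion from the rescaling and satisfies $\Psi_C(|z|)\to(2(8C)^{1/3})^+$ as $|z|\to 1^-$, the limit being extracted by elementary inversion of~\eqref{harm-separation}.

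The main obstacle is the forward-direction optimization in $\rho$: one must balance the Schwarz-Pick factor against the boundary-distance factor precisely enough that the threshold $\sigma=\sqrt{C(1-|\xi|)}$ emerges with the correct coefficient, which reduces to a somewhat delicate one-variable monotonicity check near $|w|=1$. A secondary difficulty is the careful bookkeeping in the converse direction needed to pin down the exponent $1/3$ and the limiting value $2(8C)^{1/3}$ of $\Psi_C$; these emerge only after carefully comparing hyperbolic and pseudo-hyperbolic scales in the boundary regime $1-|z|\to 0^+$.
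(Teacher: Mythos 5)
The paper itself does not prove Theorem~\ref{th:harmonic-preimage}: it is stated as the harmonic transplant of \cite[Theorem~3]{GR:2015}, with the remark that it follows from the harmonic Nehari-type criterion (constant $\delta_0$) and its converse, details being left to the reader. Your proposal is exactly that intended route -- M\"obius invariance of $S_H$ (chain rule plus $S(\phi)=0$ for automorphisms), rescaling $G(w)=F(\rho w)$, the harmonic Nehari criterion for the direct part, and univalence on pseudo-hyperbolic discs plus the Kraus-type converse for the quantitative bound -- so in approach it matches the paper, and the skeleton is sound (in particular the reduction of \eqref{harm-separation} to $s\geq 1-\sigma$ with $s=\tanh(d_H(z_1,z_2)/2)$, $\sigma=\sqrt{C(1-|\xi|)}$ is correct). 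Two points should be tightened. First, your justification of the core estimate is shaky as stated: it is neither clear nor needed that the maximum of $\bigl(1+C(1-|\varphi_\xi(\rho w)|)\bigr)\rho^2(1-|w|^2)^2(1-\rho^2|w|^2)^{-2}$ occurs at $w=0$, and the heuristic that $|\varphi_\xi(\rho w)|$ is ``pushed away from $\xi$'' actually works against you (the distortion factor can grow with $|w|$). A cruder bound suffices: using $1-|\varphi_\xi(\rho w)|\leq (1-|\xi|)(1+\rho|w|)/(1-|\xi|\rho|w|)$ and the Schwarz--Pick factor $\leq\rho^2$, one gets, for $\rho\leq 1-\sigma$, the bound $(1-\sigma)^2(1+2\sigma)\leq 1$, which is what the contradiction needs (with a small case check when $\rho|w|>|\xi|$, possible only if $C<1-|\xi|$). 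Second, in the converse your phrasing is off: it is $1-r$, not $r$, that must be of order $(1-|z|)^{1/3}$ (so $r\to 1^-$ and $1/r^2=1+\Psi_C(|z|)(1-|z|)^{1/3}$ with the stated limit), and $d_H(z_1,z_2)$ is not small -- it is bounded by the hyperbolic diameter $\log\frac{1+r}{1-r}$ of the disc, which is large but still below the separation bound because $1-|\xi|\lesssim(1-|z|)/(1-r)\ll(1-r)^2/C$; balancing these scales is precisely where the exponent $1/3$ and the limit $2(8C)^{1/3}$ come from. With these repairs your argument delivers the theorem at least to the level of rigor the paper itself offers.
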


We have not found a natural criterion which would imply that $f=h+\overline{g}\in\uloch$ is bounded. However, the inequality $|g'(z)|<|h'(z)|$ can be utilized. A domain $D\subset\C$ is starlike if for some point $a\in D$ all linear segments $[a,z]$, $z\in D$, are contained in~$D$. Let $h\in\uloca$ be univalent, let $h(\D)$ be starlike with respect to $z_0\in h(\D)$ and $f=h+\overline{g}\in\uloch$. Then the function
    $$
    z\mapsto\Omega(z)=\frac{g(z)-g(z_0)}{h(z)-h(z_0)}
    $$
maps~$\D$ into~$\D$. To see this, let $a\in\D$ and let~$R=h^{-1}([h(z_0),h(a)])$ be the pre-image of the segment $[h(z_0),h(a)]$ under $h$. Then
    $$
    |h(a)-h(z_0)|=\int_R|h'(\zeta)||d\zeta|\geq\abs{\int_R g'(\zeta)\,d\zeta}=|g(a)-g(z_0)|.
    $$
Consequently, if $f=h+\overline{g}\in\uloch$ is such that $h(\D)$ is starlike and bounded, then $f(\D)$ is bounded.





\vspace{0.5cm}


\end{document}